\newtheorem{theorem}{Theorem}[section]
\newtheorem{lemma}[theorem]{Lemma}
\newtheorem{corollary}[theorem]{Corollary}
\numberwithin{equation}{section}
\title{Spectral strengthening of a theorem on transversal critical graphs}
\author{
Muhuo Liu\thanks{
Department of Mathematics, South China Agricultural University, Guangzhou, 510642, China.} 
\thanks{Research Center for Green Development of Agriculture, South China Agricultural University, Guangzhou, 510642, China.
Email: {\tt liumuhuo@163.com}}\ \
and
Xiaofeng Gu\thanks{
Department of Computing and Mathematics, University of West Georgia, Carrollton, GA 30118, USA. 
\newline Email: {\tt xgu@westga.edu}; Corresponding author}
}
\begin{document}
\date{}
\maketitle
\noindent

\begin{abstract}
A transversal set of a graph $G$ is a set of vertices incident to all edges of $G$. The transversal number of $G$, denoted by $\tau(G)$, is the minimum cardinality of a transversal set of $G$. A simple graph $G$ with no isolated vertex is called $\tau$-critical if $\tau(G-e) < \tau(G)$ for every edge $e\in E(G)$. For any $\tau$-critical graph $G$ with $\tau(G)=t$, it has been shown that $|V(G)|\le 2t$ by Erd\H{o}s and Gallai and that $|E(G)|\le {t+1\choose 2}$ by Erd\H{o}s, Hajnal and Moon. Most recently, it was extended by Gy\'arf\'as and Lehel to $|V(G)| + |E(G)|\le {t+2\choose 2}$.
In this paper, we prove stronger results via spectrum. Let $G$ be a $\tau$-critical graph with $\tau(G)=t$ and $|V(G)|=n$, and let $\lambda_1$ denote the largest eigenvalue of the adjacency matrix of $G$. We show that $n + \lambda_1\le 2t+1$ with equality  if and only if $G$ is $tK_2$,   $K_{s+1}\cup (t-s)K_2$,  or $C_{2s-1}\cup (t-s)K_2$, where $2\leq s\leq t$; and in particular, $\lambda_1(G)\le t$ with equality  if and only if $G$ is $K_{t+1}$. We then apply it to show that for any nonnegative  integer $r$, we have $n\left(r+ \frac{\lambda_1}{2}\right) \le {t+r+1\choose 2}$ and characterize all extremal graphs. This implies a pure combinatorial result that $r|V(G)| + |E(G)| \le {t+r+1\choose 2}$, which is stronger than Erd\H{o}s-Hajnal-Moon Theorem and Gy\'arf\'as-Lehel Theorem. We also have some other generalizations.
\end{abstract}

{\small \noindent {\bf 2010 MSC:} 05C69;  05C50}

{\small \noindent {\bf Key words:} transversal set, transversal number, $\tau$-critical, spectral radius}

\section{Introduction}
Throughout the paper, let  $G=(V,E)$ be a simple graph. For terminology  not defined here, we refer readers to \cite{BoMu08}. For any $v\in V(G)$, let $d_G(v)$ or simply $d(v)$ denote the degree of $v$. A $d$-regular graph is a graph with the degree of each vertex equals to $d$. A {\it transversal set} of $G$ is a vertex subset $X\subseteq V(G)$ such that every edge of $G$ is incident to some vertex of $X$. The {\it transversal number} of $G$, denoted by $\tau(G)$, is the minimum cardinality of a transversal set of $G$. A transversal set and the transversal number are also called a {\it vertex covering} and the {\it covering number}, respectively (see\cite{BoMu08}). An {\it independent set} is a set of non-adjacent vertices and the {\it independence number} of $G$, denoted by $\alpha(G)$, is the maximum cardinality of an independent set in $G$. Clearly, $T$ is a transversal set of $G$ if and only if $V(G)-T$ is an independent set of $G$, and therefore $\tau(G) + \alpha(G) = |V(G)|$. A simple graph $G$ with no isolated vertex is called {\it $\tau$-critical} if $\tau(G-e) < \tau(G)$ for every edge $e\in E(G)$. Since $\tau(G-e)\geq \tau(G)-1$, it follows that if $G$ is $\tau$-critical, then  $\tau(G-e)=\tau(G)-1$ for  every edge $e\in E(G)$. Furthermore, $\tau(G)\leq |V(G)|-1$ with equality if and only if $G$ is a complete graph. More properties of $\tau$-critical graphs can be found in Chapter 12 of \cite{LoPl86} by Lov\'asz and Plummer.

In the following, we always assume that $t$ is a positive integer. Let $K_n, C_n, tK_2$ denote the complete graph on $n$ vertices, the cycle on $n$ vertices and $t$ disjoint copies of $K_2$, respectively. For any $\tau$-critical graph $G$ with $\tau(G)=t$, it has been shown that $|V(G)|\le 2t$ and conjectured that $|E(G)|\le {t+1\choose 2}$ by Erd\H{o}s and Gallai~\cite{ErGa61}. The conjecture was settled by Erd\H{o}s, Hajnal and Moon~\cite{EHM64}.

\begin{theorem}[Erd\H{o}s, Hajnal and Moon~\cite{EHM64}]
\label{ehmthm}
For any $\tau$-critical graph $G$ with $\tau(G)=t$, $$|E(G)|\le {t+1\choose 2},$$ where the  equality holds if and only if $G$ is $K_{t+1}$.
\end{theorem}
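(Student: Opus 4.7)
The plan is induction on $t$ with a case analysis on the maximum degree of $G$. The base case $t=1$ is immediate since $K_2$ is the only $\tau$-critical graph with $\tau=1$, and $|E(K_2)|=1=\binom{2}{2}$. For the inductive step, the first step is to establish the degree bound $\Delta(G)\le t$: for any edge $uv\in E(G)$, criticality yields a vertex cover $C$ of $G-uv$ with $|C|=t-1$, and neither $u$ nor $v$ can belong to $C$ (otherwise $C$ would also cover $uv$ and hence all of $G$, contradicting $\tau(G)=t$); since $C$ covers every other edge incident to $v$, all of $N(v)\setminus\{u\}$ lies in $C$, yielding $d(v)-1\le t-1$.

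The argument then splits on whether some vertex attains $d(v)=t$. If such a $v$ exists, write $N(v)=\{w_1,\ldots,w_t\}$ and apply the argument above to each edge $vw_i$: the corresponding cover $C_i$ of $G-vw_i$ avoids $\{v,w_i\}$ and contains $N(v)\setminus\{w_i\}$, so by a size count $C_i=N(v)\setminus\{w_i\}$ exactly. A cross-intersection argument using all the $C_i$ then forces every edge of $G$ to have both endpoints in $N[v]$: indeed, if an edge $xy$ had $x\notin N[v]$ and $y\ne v$, then $x\notin C_i$ for all $i$, so $y\in\bigcap_i C_i=\bigcap_i (N(v)\setminus\{w_i\})=\emptyset$, a contradiction. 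Since $G$ has no isolated vertex, we conclude $V(G)=N[v]$ and $n=t+1$. Hence $|E(G)|=t+|E(G[N(v)])|\le t+\binom{t}{2}=\binom{t+1}{2}$, with equality if and only if $G[N(v)]=K_t$, i.e., $G=K_{t+1}$.

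The remaining case, in which every vertex has degree at most $t-1$, is the main obstacle. The direct bound $2|E(G)|\le n(t-1)\le 2t(t-1)$ only yields $|E(G)|\le\binom{t+1}{2}$ for $t\le 3$. To handle larger $t$, I would first reduce to the connected case by strong induction: if $G=G_1\sqcup\cdots\sqcup G_k$ with $\tau(G_i)=t_i\ge 1$ and $\sum t_i=t$, then each $G_i$ is $\tau$-critical, and the routine identity $\sum_i\binom{t_i+1}{2}=\frac{\sum t_i^2+t}{2}<\frac{t^2+t}{2}=\binom{t+1}{2}$ for $k\ge 2$ gives a strict bound. For connected $G$ with $\Delta(G)\le t-1$, the plan would be to iterate the cover analysis of Case 1 at a vertex of near-maximum degree, using the resulting rigidity of the covers $C_i$ together with the Erd\H{o}s--Gallai bound $n\le 2t$ to force enough structural constraints to obtain $|E(G)|<\binom{t+1}{2}$ strictly.
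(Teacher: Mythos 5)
Your Case 1 (some vertex of degree $t$) is correct and complete: the rigidity $C_i=N(v)\setminus\{w_i\}$ and the empty-intersection argument do force $V(G)=N[v]$, $n=t+1$, and hence the bound with equality only for $K_{t+1}$. The reduction to connected components is also fine. But the remaining case $\Delta(G)\le t-1$ is the heart of the theorem, and there you have only a plan, not a proof. "Iterate the cover analysis at a vertex of near-maximum degree" does not by itself produce the inequality: the local analysis you set up only yields the degree bound $d(v)\le t$, and as you yourself compute, $2|E(G)|\le n\Delta\le 2t(t-1)$ falls short of $t(t+1)$ for every $t\ge 4$. Nothing in the proposal closes that gap, so the proof is incomplete exactly where the difficulty lies.

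The missing ingredient is a degree bound that improves as $n$ grows, namely Hajnal's theorem (Theorem~\ref{haj} in this paper): for a $\tau$-critical graph, $d(v)\le 2t+1-n$ for every vertex $v$. Your bound $d(v)\le t$ is just the instance $n=t+1$ of this; for larger $n$ Hajnal's bound is much stronger, and it is what makes the counting work. With it, the handshake lemma gives $2|E(G)|\le n(2t+1-n)$, and the quadratic $n(2t+1-n)$ is maximized at $n\in\{t,t+1\}$ with value $t(t+1)$, so $|E(G)|\le\binom{t+1}{2}$ in one line, with no case split on $\Delta$ and no induction. (The paper actually runs this through the spectral radius, $2|E(G)|/n\le\lambda_1\le\Delta\le 2t+1-n$, and handles equality via Lemma~\ref{extg}, which shows that a $(2t+1-n)$-regular component forces $G$ to be $tK_2$, $K_{s+1}\cup(t-s)K_2$, or $C_{2s-1}\cup(t-s)K_2$; among these only $K_{t+1}$ attains $\binom{t+1}{2}$ edges.) Hajnal's bound is itself proved by a cover/independent-set argument in the spirit of your Case 1 (via Theorem~\ref{lplem}: $d_G(v)\le|N(S)|-|S|+1$ for $v$ in an independent set $S$), so your instinct about where the leverage comes from is right, but you need the global version of the bound, applied together with $|V(G)|\le 2t$, rather than a repetition of the local one. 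I would also note that you invoke the Erd\H{o}s--Gallai bound $n\le 2t$ without proof; if the intent is a self-contained argument, that too needs to be established.
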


The theorem was recently extended  by Gy\'arf\'as and Lehel~\cite{GyLe20} with a very short proof using  the sum of order and size.
This combined bound immediately implies Theorem~\ref{ehmthm} since $|V(G)|\ge t+1$.

\begin{theorem}[Gy\'arf\'as and Lehel~\cite{GyLe20}]
\label{gyth}
For any $\tau$-critical graph $G$ with $\tau(G)=t$, $$|V(G)| + |E(G)|\le {t+2\choose 2},$$
with equality if and only if $G$ is $K_{t+1}$, $2K_2$, or $C_5$.
\end{theorem}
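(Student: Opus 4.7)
The plan is to proceed by (strong) induction on $t$. For the base case $t = 1$, the only $\tau$-critical graph is $K_2$, which satisfies $|V|+|E| = 3 = \binom{3}{2}$, so the bound holds with equality.

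For the inductive step, I would first dispose of the disconnected case. If $G$ is disconnected, take any connected component $G_1$ and let $G_2 = G - V(G_1)$ (a vertex-disjoint union of the remaining components); each $G_i$ is $\tau$-critical with $\tau(G_i) = t_i \ge 1$ and $t_1 + t_2 = t$. The inductive hypothesis yields $|V(G_i)| + |E(G_i)| \le \binom{t_i+2}{2}$ for $i = 1, 2$, and the elementary identity
\[
\binom{t_1+2}{2} + \binom{t_2+2}{2} = \binom{t+2}{2} - (t_1 t_2 - 1)
\]
gives the desired bound. Equality forces $t_1 t_2 = 1$, hence $t_1 = t_2 = 1$, which yields $G = 2K_2$.

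For the connected case, if $|V(G)| = t+1$ then $G = K_{t+1}$ (the unique $\tau$-critical graph of order $t+1$), and equality holds via $(t+1) + \binom{t+1}{2} = \binom{t+2}{2}$. Otherwise $|V(G)| \ge t+2$ and $\alpha(G) \ge 2$. Here I aim to show strict inequality, except when $G = C_5$ (which can only occur for $t = 3$). A natural strategy is to select an edge $e = uv$ of $G$, pass to a $\tau$-critical subgraph $H$ of $G - e$ with $\tau(H) = t - 1$ (removing additional edges as needed to maintain $\tau$-criticality, and deleting any isolated vertices that arise), and apply the inductive hypothesis to obtain $|V(H)| + |E(H)| \le \binom{t+1}{2}$. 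It then suffices to show that $(|V(G)| - |V(H)|) + (|E(G)| - |E(H)|) \le t+1$.

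The main obstacle will be executing this last step: the edge $e$ must be chosen so that few vertices become isolated and few extra edges have to be removed. Key structural tools I would deploy include the Erd\H{o}s--Gallai bound $|V(G)| \le 2t$, the property that every edge of $G$ lies in some maximum independent set of $G - e$ of size $\alpha(G) + 1$ containing both endpoints, and selecting $e$ inside a matching that saturates a carefully chosen minimum vertex cover. The equality analysis is the most delicate piece: the loss of exactly $t + 1$ is attainable only in configurations whose rigidity forces $G$ to be $C_5$ at $t = 3$. Combined with the disconnected case and $G = K_{t+1}$, this yields the full characterization of extremal graphs.
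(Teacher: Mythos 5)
Your inductive framework is set up cleanly, and several pieces are correct and complete: the base case $t=1$ (where $K_2$ is indeed the only $\tau$-critical graph), the disconnected case via the identity $\binom{t_1+2}{2}+\binom{t_2+2}{2}=\binom{t+2}{2}-(t_1t_2-1)$ (which I checked and which correctly forces $G=2K_2$ at equality), and the case $|V(G)|=t+1$ forcing $G=K_{t+1}$. However, there is a genuine gap at exactly the point you flag yourself: the connected case with $|V(G)|\ge t+2$. The entire content of the theorem is concentrated in the claim that one can choose the edge $e$ (and the subsequent edge deletions down to a $\tau$-critical $H$ with $\tau(H)=t-1$) so that
\[
\bigl(|V(G)|-|V(H)|\bigr)+\bigl(|E(G)|-|E(H)|\bigr)\le t+1,
\]
with equality only in configurations rigid enough to force $C_5$. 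You list tools you ``would deploy'' but give no argument that controls either the number of vertices that become isolated or the number of extra edges that must be stripped to restore criticality; a priori both quantities can be large, and nothing in the proposal bounds their sum. The equality analysis is likewise only asserted. As written, this is a proof plan with its hardest step unexecuted, so it cannot be accepted as a proof.

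It is worth contrasting your route with the one the paper takes. The paper never inducts: it derives this statement (in fact the stronger Theorem~\ref{rvpe} with $r=1$) from a global counting argument. Hajnal's bound (Theorem~\ref{haj}) gives $\Delta\le 2t+1-n$, and combining with $2|E(G)|\le n\lambda_1\le n\Delta$ (Lemma~\ref{21e}) yields
\[
|V(G)|+|E(G)|\le n+\frac{n(2t+1-n)}{2}=\frac{n(2t+3-n)}{2}\le \binom{t+2}{2},
\]
since the quadratic in $n$ is maximized at $n\in\{t+1,t+2\}$. Equality forces a $(2t+1-n)$-regular component, and Lemma~\ref{extg} then classifies the extremal graphs, from which $K_{t+1}$, $2K_2$ and $C_5$ emerge. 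If you want to salvage your approach, the missing deletion lemma is where all the work lies; alternatively, the degree-bound route above closes the argument in a few lines and also delivers the equality characterization.
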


We will prove an extension of Theorems~\ref{gyth}, as shown below. Clearly, Theorem~\ref{rvpe} implies Theorems~\ref{ehmthm} and \ref{gyth} when $r=0, 1$, respectively.

\begin{theorem}
\label{rvpe}
Let $r$ be a nonnegative integer. Suppose that $G$ is a $\tau$-critical graph with $\tau(G)=t \ge r$ and $|V(G)|=n$. Then
\begin{equation*}
r|V(G)| + |E(G)|\le {t+r+1\choose 2},
\end{equation*}
with equality if and only if $G$ is $K_{t+1}$ when $r=0$; $K_{t+1}$, $2K_2$, or $C_5$ when $r=1$;
and $rK_2$, $(r+1)K_2$, $C_{2r+1}$, or $C_{2r+3}$ when $r\ge 2$.
\end{theorem}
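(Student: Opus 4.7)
The plan is to derive Theorem~\ref{rvpe} by combining the spectral bound $n+\lambda_1(G)\le 2t+1$ established earlier in the paper with the elementary Rayleigh estimate $\lambda_1(G)\ge 2|E(G)|/n$, which follows by plugging the all-ones vector into $x^{T}A(G)x/x^{T}x$. Chaining these two inequalities yields
\[
r|V(G)|+|E(G)| \;\le\; rn+\frac{n\lambda_1(G)}{2} \;\le\; rn+\frac{n(2t+1-n)}{2} \;=\; \frac{n(2t+2r+1-n)}{2}.
\]
Writing $f(n)=n(2t+2r+1-n)$, the function $f$ is a downward parabola with vertex at $n=t+r+\tfrac{1}{2}$, so on the integers it attains its maximum precisely at $n\in\{t+r,\,t+r+1\}$ with common value $(t+r)(t+r+1)=2\binom{t+r+1}{2}$. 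This gives the asserted inequality $r|V(G)|+|E(G)|\le \binom{t+r+1}{2}$.

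For the equality analysis, equality throughout forces three simultaneous conditions: (a) $2|E(G)|=n\lambda_1(G)$, which is the equality case of the Rayleigh estimate and hence forces $G$ to be regular; (b) $n+\lambda_1(G)=2t+1$, which by the extremal classification of the earlier spectral theorem restricts $G$ to $tK_2$, or $K_{s+1}\cup(t-s)K_2$, or $C_{2s-1}\cup(t-s)K_2$ with $2\le s\le t$; and (c) $n\in\{t+r,\,t+r+1\}$. Since the component $K_{s+1}$ contributes vertices of degree $s\ge 2$ and $C_{2s-1}$ contributes vertices of degree $2$, while every $K_2$-component contributes vertices of degree $1$, condition (a) eliminates every mixed graph in the spectral extremal list and leaves only the three pure candidates $tK_2$, $K_{t+1}$ (the $s=t$ clique case, no $K_2$-part), and $C_{2t-1}$ (the $s=t$ odd-cycle case, no $K_2$-part).

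It then remains to solve $|V(H)|\in\{t+r,\,t+r+1\}$ for each pure candidate $H$: $|V(tK_2)|=2t$ forces $t\in\{r,\,r+1\}$ and yields $rK_2$ or $(r+1)K_2$; $|V(K_{t+1})|=t+1$ forces $r\in\{0,\,1\}$ and yields $K_{t+1}$; and $|V(C_{2t-1})|=2t-1$ forces $t\in\{r+1,\,r+2\}$ and yields $C_{2r+1}$ or $C_{2r+3}$. Reorganizing the surviving graphs by the value of $r$ reproduces exactly the three listed extremal families: $K_{t+1}$ when $r=0$; $K_{t+1},\,2K_2,\,C_5$ when $r=1$; and $rK_2,\,(r+1)K_2,\,C_{2r+1},\,C_{2r+3}$ when $r\ge 2$. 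I expect the main obstacle to be the equality bookkeeping rather than the inequality itself: one has to juggle the three simultaneous constraints (regularity, the spectral extremal form, and the admissible values of $n$) and then sanity-check that the small-$r$ output correctly recovers Theorems~\ref{ehmthm} and~\ref{gyth}.
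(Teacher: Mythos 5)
Your proof is correct and takes essentially the same route as the paper: the paper chains $|E(G)|\le n\lambda_1/2$ with Corollary~\ref{spect} (which is itself derived from Theorem~\ref{nlam} by exactly the parabola maximization over $n\in\{t+r,\,t+r+1\}$ that you carry out), and then extracts the regular graphs from that corollary's extremal list. You merely inline Corollary~\ref{spect} and apply the regularity filter before the constraint on $n$ rather than after, which yields the same three pure candidates $tK_2$, $K_{t+1}$, $C_{2t-1}$ and the same final classification.
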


In fact, we discover stronger results involving the largest eigenvalue of adjacency matrix that strengthen Theorems~\ref{ehmthm}, \ref{gyth} and \ref{rvpe}, with all extremal graphs characterized. This will be done in Section~\ref{sec:spect}, and Theorem~\ref{rvpe} will be proved there. 

In the last section, we make some remarks that our results can be further extended to more general forms and to eigenvalues of signless Laplacian matrix. We start with some combinatorial tools on $\tau$-critical graphs in the next section.

\section{Combinatorial preliminaries}
In this section, we present two useful theorems on $\tau$-critical graphs, and prove a combinatorial lemma, which will play an important role in the proofs of our main results. Theorem~\ref{haj} is the complement version of a theorem of Hajnal~\cite{Hajnal65}. Theorem~\ref{lplem} was discovered by Sur\'anyi~\cite{Sura73}, but can also be found in \cite{LoPl86}.
\begin{theorem}[Hajnal~\cite{Hajnal65}]
\label{haj}
Let $G$ be a $\tau$-critical graph with $\tau(G)=t$ and $|V(G)|=n$. Then the degree $d(v)\le 2t+1-n$ for every $v\in V(G)$.
\end{theorem}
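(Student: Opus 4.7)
The plan is to reduce the inequality to an Erd\H{o}s--Gallai-type bound on a suitable induced subgraph. Let $v\in V(G)$, and set $d:=d(v)$, $\alpha:=\alpha(G)=n-t$, and $B:=V(G)\setminus(\{v\}\cup N(v))$, so $|B|=n-1-d$. Since $d\le 2t+1-n$ rearranges to $|V(G[B])|\ge 2(\alpha-1)$, the task amounts to showing the Erd\H{o}s--Gallai inequality $|V(H)|\ge 2\alpha(H)$ on $H=G[B]$, together with identifying $\alpha(G[B])=\alpha-1$.

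First I would compute
\[
\alpha(G[B])=\alpha-1,\qquad \tau(G[B])=t-d.
\]
Picking any $u_1\in N(v)$, $\tau$-criticality of $G$ at the edge $vu_1$ yields a minimum transversal $T_1$ of $G-vu_1$ of size $t-1$ with $v,u_1\notin T_1$. Then $T_1\cup\{u_1\}$ is a minimum transversal of $G$ avoiding $v$, so $N(v)\subseteq T_1\cup\{u_1\}$ in order to cover every edge $vu_i$. Hence $T_1\cap B$ is a transversal of $G[B]$ of size $(t-1)-(d-1)=t-d$, and its complement $B\setminus T_1$ is an independent set of $G[B]$ of size $(n-1-d)-(t-d)=\alpha-1$. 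On the other hand, since $v$ has no neighbor in $B$, any independent set of $G[B]$ can be extended by $v$ to one of $G$, giving $\alpha(G[B])\le\alpha-1$; both identities follow.

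It remains to establish $|V(G[B])|\le 2\tau(G[B])$ for the (possibly non-$\tau$-critical) subgraph $G[B]$. The natural route is to pass to a $\tau$-critical spanning subgraph of $G[B]$ by successively deleting edges while preserving $\tau(G[B])$, and to then apply Erd\H{o}s--Gallai \cite{ErGa61}. The main obstacle is ruling out isolated vertices in this reduction: a vertex $w\in B$ isolated in $G[B]$ would have $N_G(w)\subseteq N(v)$, and for any $u\in N(v)\cap N_G(w)$ the independent set $I$ of size $\alpha+1$ in $G-wu$ supplied by $\tau$-criticality contains $w$ and $u$, with $v\notin I$; if $|I\cap N(v)|=1$ then $(I\setminus\{u\})\cup\{v\}$ would be an independent set of $G$ of size $\alpha+1$, a contradiction, so $|I\cap N(v)|\ge 2$. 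This forces $N(v)$ to contain non-adjacent pairs of a specific form, and combined with a further application of $\tau$-criticality one rules out the existence of such $w$ and hence of isolated vertices in the reduction. Once $|V(G[B])|\le 2\tau(G[B])=2(t-d)$ is in hand, substitution yields $n-1-d\le 2(t-d)$, i.e., $d\le 2t+1-n$, as required.
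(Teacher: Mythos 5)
Your setup is correct and nicely done: the identities $\alpha(G[B])=\alpha-1$ and $\tau(G[B])=t-d$ for $B=V(G)\setminus N[v]$ are established rigorously (the observation that a minimum transversal of $G-vu_1$ must avoid both $v$ and $u_1$, hence forces $N(v)\subseteq T_1\cup\{u_1\}$, is exactly right). But the final step contains a genuine gap. After your reduction, the inequality you still need, $|V(G[B])|\le 2\tau(G[B])$, is literally equivalent to the theorem itself: with $|B|=n-1-d$ and $\tau(G[B])=t-d$ it rearranges to $d\le 2t+1-n$. The Erd\H{o}s--Gallai bound $|V(H)|\le 2\tau(H)$ holds only for $\tau$-critical $H$, and $G[B]$ need not be $\tau$-critical (take $G=C_7$: then $G[B]$ is a path on four vertices, which is not critical). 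Your plan to pass to a $\tau$-critical subgraph with the same transversal number does not close the argument, because that reduction can strand vertices: a graph with no isolated vertices can still have a critical core on far fewer vertices (a star $K_{1,m}$ has no isolated vertices, $\tau=1$, and its core is a single edge, so $|V|\le 2\tau$ fails badly). You address only vertices that are \emph{already} isolated in $G[B]$ --- and even there the argument stops at ``$|I\cap N(v)|\ge 2$'' and asserts rather than proves the contradiction --- while saying nothing about vertices that become isolated during the edge-deletion process, which is where the actual content of the inequality lives. In short, you have reduced Hajnal's bound for $G$ at $v$ to an equivalent bound for $G[B]$ without a tool to prove the latter.

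For comparison, the paper does not prove Theorem~\ref{haj} this way at all: it deduces it in two lines from Sur\'anyi's lemma (Theorem~\ref{lplem}). Take a maximum independent set $S$ containing $v$ (such an $S$ exists in a $\tau$-critical graph); then $N(S)=V(G)\setminus S$ is a minimum transversal, so $|S|=n-t$, $|N(S)|=t$, and the lemma gives $d(v)\le |N(S)|-|S|+1=2t+1-n$. If you want to keep your deletion-of-$N[v]$ framework, you would still need an input of comparable strength to Theorem~\ref{lplem} to control $\alpha(G[B])$ versus $\tau(G[B])$; Erd\H{o}s--Gallai alone is not enough.
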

\begin{theorem}[Sur\'anyi~\cite{Sura73}; see also Theorem 12.1.13 of \cite{LoPl86}]
\label{lplem}
Let $G$ be a $\tau$-critical graph and $S$ be an independent set of $G$. Then for any vertex $v\in S$,
$d_G (v)\le |N(S)| -|S| +1$.
\end{theorem}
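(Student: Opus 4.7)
I would prove $d_G(v) \le |N(S)| - |S| + 1$ by reformulating it as $|N(S) \setminus N(v)| \ge |S| - 1$, which is valid since $v \in S$ and $S$ independent force $N(v) \subseteq N(S)$. Writing $S' := S \setminus \{v\}$ and $A := N(S) \setminus N(v)$, the goal becomes to find a matching saturating $S'$ in the bipartite graph $H$ with parts $S'$ and $A$ and edges inherited from $G$; such a matching immediately yields $|A| \ge |S| - 1$.

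I plan to proceed by induction on $|S|$, with the vacuous base $|S| = 1$. For the inductive step, the hypothesis applied to the independent set $X \cup \{v\}$ for any proper $X \subsetneq S'$ gives $d_G(v) \le |N_G(X \cup \{v\})| - |X|$, which rearranges (using $N_G(X \cup \{v\}) = N(v) \cup N_G(X)$) to $|N_G(X) \setminus N(v)| \ge |X|$---exactly Hall's condition at $X$ in $H$. Thus Hall's condition is verified on every proper subset of $S'$, leaving only $X = S'$, which is equivalent to the lemma itself.

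This last case requires $\tau$-criticality. Assuming for contradiction $|A| \le |S| - 2$, K\"onig's theorem combined with the proper-subset Hall conditions yields a vertex cover $C$ of $H$ with $|C| \le |S| - 2$. Selecting any $w \in N(v)$ and applying $\tau$-criticality to the edge $vw$ furnishes a vertex cover $T_w$ of $G - vw$ with $|T_w| = t - 1$ and $v, w \notin T_w$, so that $T_w \cup \{v\}$ is a minimum vertex cover of $G$. The plan is then to combine $T_w$ and $C$ via a swap to build a vertex cover of $G$ of size strictly less than $t$, contradicting $\tau(G) = t$.

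The main obstacle is the swap construction. The K\"onig cover $C$ only handles edges between $S'$ and $A$, so edges inside $N(S)$, inside $V(G) \setminus (S \cup N(S))$, and between these two parts must still be absorbed by modifying $T_w \cup \{v\}$. The strategy is to swap out vertices of $T_w \cap N(v)$ rendered redundant by including $v$ together with $C$, in exchange for the vertices of $C$ (and possibly a subset of $S'$). The delicate accounting of overlaps among $T_w$, $N(v)$, $C$, and $S'$, and verifying that the net size drops below $t$, is the crux; it may require additional appeals to $\tau$-critical structure (e.g., that every vertex lies in some maximum independent set of $G$, provable directly from the critical property by extracting a max independent set in $G - vw$ of size $\alpha(G)+1$ and deleting $w$) to make the accounting tight.
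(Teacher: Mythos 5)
The paper does not actually prove Theorem~\ref{lplem}; it is quoted from Sur\'anyi and from Lov\'asz--Plummer, so your proposal has to stand on its own. It does not: there is a genuine gap at exactly the point where all the content of the theorem lives. The reduction is fine as far as it goes --- rewriting the claim as $|N(S)\setminus N(v)|\ge |S|-1$, setting up the bipartite graph $H$ on $S'=S\setminus\{v\}$ and $A=N(S)\setminus N(v)$, and applying the induction hypothesis to $X\cup\{v\}$ to verify Hall's condition for every proper $X\subsetneq S'$ are all correct. But, as you yourself note, the remaining case $X=S'$ \emph{is} the theorem, so the induction and the K\"onig step have gained essentially nothing (when $|A|\le|S'|-1$ the cover $C$ can simply be taken to be $A$ itself). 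Everything therefore hinges on the final contradiction, which you present only as a ``plan'' whose ``crux'' you explicitly leave unresolved. A proof whose decisive step is declared to be an open obstacle is not a proof.

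Concretely, I do not see how the proposed swap can work as described. The cover $C$ controls only edges between $S'$ and $A$; the moment you delete vertices of $T_w\cap N(v)$ from the cover $T_w\cup\{v\}$, every edge from a deleted vertex into the rest of $N(S)$ or into $V(G)\setminus(S\cup N(S))$ becomes uncovered, and nothing in your construction accounts for these. Moreover, you invoke criticality only through the single edge $vw$, which produces a single alternative cover $T_w$; the closely related argument in Lemma~\ref{extg} of this paper needs one auxiliary maximum independent set $S_i$ for \emph{each} edge $uv_i$ at the distinguished vertex, together with a disjointness and counting argument across all of them, and one should expect the same here. A more promising way to close your case is an independent-set exchange rather than a cover swap: if $|A|\le|S|-2$, check that for every maximum independent set $M$ of $G$ the set $\left(M\setminus(A\cup N(v))\right)\cup S$ is independent (any edge leaving $S$ lands in $N(S)=N(v)\cup A$), so $|M\cap N(v)|+|M\cap S|\ge |S|-|A|\ge 2$; then play this off against the maximum independent sets $M_i$ supplied by criticality at each edge $vv_i$, which meet $N(v)$ in exactly $\{v_i\}$. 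Even this requires further work, and as written your proposal simply omits the step that carries the theorem.
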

Theorem~\ref{lplem} actually implies Theorem~\ref{haj}. To see this, notice that for any vertex $v$, we can find a maximum independent set $S$ containing $v$ (by Lemma 12.1.2 of \cite{LoPl86}). Then $N(S)$ is a minimum transversal set of $G$, where $N(S)$ denotes the set of vertices that are adjacent to some vertex of $S$.

\begin{lemma}
\label{extg}
Let $G$ be a $\tau$-critical graph with $\tau(G)=t$ and  $|V(G)|=n$. If $G$ contains  a  $(2t+1-n)$-regular component,
then $G$ is $t K_2$, $K_{s+1}\cup (t-s)K_2$   or $C_{2s-1}\cup (t-s)K_2$, where $2\leq s\leq t$.
\end{lemma}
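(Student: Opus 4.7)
The plan is to proceed in two stages: first show that every component of $G$ other than the regular component $H$ must be a $K_2$, and then classify $H$ itself.

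For the first stage, let $H$ denote the $(2t+1-n)$-regular component and let $H_1,\dots,H_k$ be the remaining components of $G$, with $t_i=\tau(H_i)$ and $n_i=|V(H_i)|$; each $H_i$ is itself $\tau$-critical. Applying Theorem~\ref{haj} to the $\tau$-critical graph $H$ gives $2t+1-n \le 2\tau(H)+1-|V(H)|$, which after using $t=\tau(H)+\sum_i t_i$ and $n=|V(H)|+\sum_i n_i$ simplifies to $\sum_i n_i \le 2\sum_i t_i$. Combined with the Erd\H{o}s--Gallai bound $n_i\le 2t_i$ on each component, equality $n_i=2t_i$ must hold for every $i$, i.e., $\alpha(H_i)=t_i$. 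For any vertex $v$ of $H_i$ and a maximum independent set $S$ of $H_i$ containing $v$, Theorem~\ref{lplem} then yields $d_{H_i}(v)\le|N_{H_i}(S)|-|S|+1\le t_i-t_i+1=1$, so $H_i$ is $1$-regular and hence $H_i=K_2$. Consequently $G=H\cup(t-\tau(H))K_2$.

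For the second stage, set $s=\tau(H)$, $n_H=|V(H)|$ and $d=2s+1-n_H$, so that $H$ is connected, $d$-regular and $\tau$-critical. If $d=1$ then $H=K_2$ and $G=tK_2$. If $d=2$ then $H$ is a cycle, and $\tau$-criticality forces it to be odd, so $H=C_{2s-1}$ with $s\ge 2$, giving $G=C_{2s-1}\cup(t-s)K_2$. The remaining case $d\ge 3$ is the main technical obstacle; here I aim to prove $H=K_{s+1}$, hence $G=K_{s+1}\cup(t-s)K_2$ with $s\ge 2$.

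For the case $d\ge 3$ my plan is an edge-counting argument exploiting that Hajnal's bound is attained by every vertex of $H$. For any maximum independent set $S$, equality in Theorem~\ref{lplem} for each $v\in S$ forces $|N_H(S)|=s$, and $d$-regularity then yields that the subgraph induced on $V(H)\setminus S$ (of order $s$) has exactly $\binom{d}{2}$ edges. Since $\tau$-criticality provides a maximum independent set through every vertex, these constraints must hold simultaneously across many choices of $S$; the hardest step is to combine them to rule out $s>d$, at which point I expect a local swap argument --- producing two non-adjacent vertices of $V(H)\setminus S$ with a common unique $S$-neighbor and replacing that neighbor by both vertices to create an independent set of size $\alpha(H)+1$ --- to deliver the needed contradiction, forcing $s=d$, $n_H=s+1$ and $H=K_{s+1}$. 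Combining the two stages then produces the stated characterization.
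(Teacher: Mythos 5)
Your first stage is sound and essentially matches the paper's reduction: the paper also applies Theorem~\ref{haj} to pin down $|V(G-G')|=2\tau(G-G')$ and conclude $G-G'=(t-s)K_2$ (it gets the degree bound $\le 1$ directly from Hajnal's theorem applied to $G-G'$ rather than from Theorem~\ref{lplem} on each component, but these are equivalent). One small slip: the inequality you derive from Hajnal's theorem is $\sum_i n_i \ge 2\sum_i t_i$, not $\le$; only the $\ge$ direction, combined with Erd\H{o}s--Gallai, forces the componentwise equalities $n_i=2t_i$, so your written sign is reversed even though the intended logic is clear. The cases $d=1$ and $d=2$ of the second stage are also fine.

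The genuine gap is the case $d\ge 3$. Your preparatory observations ($N_H(S)=V(H)\setminus S$, $|V(H)\setminus S|=s$, and $e(V(H)\setminus S)=\binom{d}{2}$) are correct, but the step that actually proves $s=d$ is only announced, not carried out: you ``expect'' a pair of non-adjacent vertices of $V(H)\setminus S$ sharing a common unique $S$-neighbor, and you do not establish that such a pair exists. It is not clear that it does: the counting above only gives that the number of $S$-edges leaving $V(H)\setminus S$ is $d(s+1-d)$, so the number of vertices with a unique $S$-neighbor is at least $s-(d-1)(s-d)$, which can be nonpositive when $s$ is large relative to $d$; and even when such vertices abound you still need two of them, non-adjacent, attached to the \emph{same} vertex of $S$. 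The paper closes this case by a different device: fix a vertex $u$ with neighbors $v_1,\dots,v_d$, use $\tau$-criticality of the edges $uv_i$ to produce maximum independent sets $T_i$ of $G'$ with $v_i\in T_i$ and $u,v_j\notin T_i$ for $j\ne i$, show the $T_i$ are pairwise disjoint via Theorem~\ref{lplem} (a nonempty $T_i\cap T_j$ would force every vertex outside $T_i\cup T_j$, in particular $u$, to have a neighbor in $T_i\cap T_j$), and then count $p\ge 1+d(p-s)$ to get $(p-s-1)(2s-1-p)\le 0$, which yields $p=s+1$ or $d\le 2$ in one stroke. You would need either to supply a proof of your swap step or to replace it with an argument of this kind; as written, the $d\ge 3$ case is unproved.
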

\begin{proof} Let $d=2t+1-n$ and let   $G'$ be a $d$-regular component of $G$. Let $u\in V(G')$.
Since $G$ is $\tau$-critical, $G'$ is also $\tau$-critical and we suppose that $\tau(G')=s$. Notice that if $X$ is a transversal set of $G'$,
then $V(G')-X$ is an independent set, and thus the independence number $\alpha(G')=p-s$, where $|V(G')|=p$.

Notice that $G-G'$ is $\tau$-critical and so $|V(G-G')|\leq 2\tau(G-G')$, that is, $n-p\le 2(t-s)$. It follows that $2s-p \le 2t-n$.
By Theorem~\ref{haj}, we have $$2s+1-p\leq 2t+1-n =d= d(u)\leq 2s+1-p$$ and thus $$2(t-s)=n-p\,\,\text{and}\,\,d=2s+1-p.$$
It follows that $|V(G-G')| = 2\tau(G-G') = 2(t-s)$.
By Theorem~\ref{haj}, the degree of  each vertex of $G-G'$ is at most $2\tau(G-G')-|V(G-G')|+1=1$, and thus $G-G'=(t-s)K_2$.

To complete the proof, it suffices to show that $G'$ is $K_2$, $K_{s+1}$   or $C_{2s-1}$ for $s\geq 2$.
If $d=1$, then $G'$ is $K_2$ and so the result already holds. Thus, we may suppose that $d\geq 2$ in the following.

 Let  $v_1, v_2,\ldots, v_{d}$  be all  neighbors of $u$, and let  $e_i = uv_i$ for $i=1,2,\ldots, d$.
Since $G'$ is $\tau$-critical, we have $\tau(G'-e_i) = s-1$. In other words, we can find an independent set $S_i$ of $G'-e_i$
with $|S_i| = p-s+1$. By the $\tau$-criticality of $G'$,  we have $u, v_i\in S_i$.
(Otherwise if one of $\{u, v_i\}$ is not in $S_i$, then adding $e_i$ back does not affect the independence
of $S_i$ and thus $S_i$ is also an independent set of $G'$, violating the fact that $\alpha(G')=p-s$.)

Now for all $j\neq i$, $v_j$ is adjacent to $u$ in $G'-e_i$, and thus $v_j\not\in S_i$ since $u\in S_i$.
Let $T_i = S_i - u$. 
Since $T_i$ is an independent set of $G'$ with $|T_i| = p -s$, it turns out that $T_i$ is a maximum independent set of $G'$.

We claim that $T_i\cap T_j =\emptyset$ for all $i\neq j$. If not, then let $w\in T_i\cap T_j$. Since $T_i\cap T_j$ is an independent set,
by Theorem~\ref{lplem}, we have $$2s+1-p=d= d(w) \le |N(T_i\cap T_j)| -|T_i\cap T_j| +1,$$ whence
$$|N(T_i\cap T_j)| \ge |T_i\cap T_j|  + 2s-p=|T_i\cap T_j| +p -|T_i| - |T_j| = p - |T_i\cup T_j|.$$

Observing that $N(T_i \cap T_j)$ is a subset of $V(G')\setminus (T_i \cup T_j)$, we can conclude that every vertex of $G'$ outside of $T_i\cup T_j$ is adjacent to some vertex in $T_i\cap T_j$. This is impossible, since $u$ is not adjacent to any vertex in $T_i\cap T_j$. Thus $T_i\cap T_j =\emptyset$ for all $i\neq j$.

Hence $$p = |V(G')| \ge 1 + \sum_{i=1}^{d} |T_i| = 1 +d(p-s) = 1 + (2s+1-p)(p-s),$$ yielding $$(p-s-1)(2s-1-p)\le 0.$$
Notice that we always have $p\ge s+1$. Hence $p=s+1$ or $2s-1-p\le 0$.

If $p = s+1$, then $$d=2s+1-p=s=p-1$$ and so   $G'$ is $K_{p} = K_{s+1}$.

If $2s-1-p\le 0$, then $$2\leq d= 2s+1 -p\le 2,$$ yielding that   $p=2s-1$, and so  $G'= C_p = C_{2s-1}$.
This completes the proof of the lemma.
\end{proof}

\section{Spectral strengthening}
\label{sec:spect}
In this section, we present spectral analogues that strengthen Theorems~\ref{ehmthm} and \ref{gyth}, and determine all extremal graphs. Theorem~\ref{nlam}, Corollaries~\ref{lam1} and \ref{spect} are the main results, and Theorem~\ref{rvpe} will be proved at the end of the section.

Let $\lambda_1 := \lambda_1(G)$ denote the largest eigenvalue of the adjacency matrix of $G$, which is called the {\it spectral radius} of $G$. The following lemma is well-known (see also Theorem 3.2.1 of \cite{Cd2}).
\begin{lemma}\label{21e}
For any graph $G$ with maximum degree $\Delta$, $$\frac{2|E(G)|}{|V(G)|}\leq \lambda_1\leq \Delta,$$
with the first equality if and only if $G$ is regular, and the second equality if and only if $G$ has a $\Delta$-regular component.
\end{lemma}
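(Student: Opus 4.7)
The plan is to invoke two standard linear-algebra tools: the Rayleigh quotient for the lower bound, and Perron--Frobenius together with a component-wise analysis for the upper bound. Throughout, let $A=A(G)$ be the adjacency matrix, which is symmetric with nonnegative entries.

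For the lower bound, I would observe that $\lambda_1=\max_{x\neq 0} \frac{x^{T}Ax}{x^{T}x}$. Plugging in the all-ones vector $\mathbf{1}\in\mathbb{R}^{|V(G)|}$ yields
\[
\lambda_1 \;\ge\; \frac{\mathbf{1}^{T}A\mathbf{1}}{\mathbf{1}^{T}\mathbf{1}} \;=\; \frac{2|E(G)|}{|V(G)|},
\]
since $\mathbf{1}^{T}A\mathbf{1}=\sum_{v}d(v)=2|E(G)|$. Equality in the Rayleigh quotient holds exactly when $\mathbf{1}$ is an eigenvector of $A$ for $\lambda_1$, i.e.\ every row sum of $A$ equals $\lambda_1$; this is precisely the condition that $G$ is regular.

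For the upper bound, I would reduce to a connected component. Since the spectrum of $A(G)$ is the union of the spectra of its components, there exists a component $H$ with $\lambda_1(H)=\lambda_1(G)$. As $A(H)$ is nonnegative and irreducible, Perron--Frobenius supplies a positive eigenvector $x$ with $A(H)x=\lambda_1(G) x$. Pick $v^{*}\in V(H)$ with $x_{v^{*}}=\max_{v} x_v$; then
\[
\lambda_1(G)\,x_{v^{*}} \;=\; \sum_{u\sim v^{*}} x_{u} \;\le\; d_H(v^{*})\,x_{v^{*}} \;\le\; \Delta\,x_{v^{*}},
\]
and dividing by $x_{v^{*}}>0$ gives $\lambda_1\le \Delta$.

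For the characterization of equality, suppose $\lambda_1(G)=\Delta$. Then the inequalities above are tight, forcing $d_H(v^{*})=\Delta$ and $x_u=x_{v^{*}}$ for every neighbor $u$ of $v^{*}$. Iterating the same argument at each such neighbor, and using that $H$ is connected, I would conclude that every vertex of $H$ attains the maximum $x$-value and has degree $\Delta$, so $H$ is a $\Delta$-regular component of $G$. Conversely, if $H\subseteq G$ is a $\Delta$-regular component, restricting $\mathbf{1}$ to $V(H)$ yields $\lambda_1(H)=\Delta$ by the lower-bound part applied to $H$, hence $\lambda_1(G)\ge \Delta$, and combined with the upper bound we obtain equality. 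No serious obstacle is anticipated; the only subtlety is remembering to pass to a component achieving $\lambda_1(G)$ before invoking Perron--Frobenius, so that the eigenvector can be taken strictly positive on that component.
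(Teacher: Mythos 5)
Your proof is correct and is the standard argument (Rayleigh quotient at $\mathbf{1}$ for the lower bound, Perron--Frobenius plus the maximum-entry trick on a component attaining $\lambda_1$ for the upper bound, with the equality cases read off from tightness). The paper gives no proof of this lemma at all --- it simply cites it as well-known from Cvetkovi\'c--Rowlinson--Simi\'c --- so your write-up supplies exactly the argument being referenced, and the only point worth noting is the trivial degenerate case of an edgeless graph, where $\Delta=\lambda_1=0$ and every isolated vertex is a $0$-regular component, so the statement still holds.
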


Now we are ready to prove a spectral extremal theorem on $\tau$-critical graphs with respect to the sum of the order and spectral radius, which strengthens Theorem~\ref{gyth}.
\begin{theorem}\label{nlam}
For any $\tau$-critical graph $G$ with $\tau(G)=t$ and $|V(G)|=n$, $$n + \lambda_1 \le 2t+1,$$
with equality   if and only if $G$ is  $tK_2$, $K_{s+1}\cup (t-s)K_2$,  or $C_{2s-1}\cup (t-s)K_2$ for $2\leq s\leq t$.
\end{theorem}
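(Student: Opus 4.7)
The proof is essentially a direct assembly of the tools already developed. The plan is to observe that the inequality $n+\lambda_1\le 2t+1$ is the concatenation of Lemma~\ref{21e} (which gives $\lambda_1\le\Delta$) and Hajnal's Theorem~\ref{haj} (which gives $\Delta\le 2t+1-n$), and then to read off the equality case through Lemma~\ref{extg}.

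More precisely, I would first let $\Delta$ denote the maximum degree of $G$. By Lemma~\ref{21e}, $\lambda_1\le\Delta$, and by Theorem~\ref{haj}, $d(v)\le 2t+1-n$ for every $v\in V(G)$, hence $\Delta\le 2t+1-n$. Chaining these yields
\[
n+\lambda_1 \;\le\; n+\Delta \;\le\; 2t+1,
\]
which is the desired bound.

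For the equality case, suppose $n+\lambda_1=2t+1$. Then both inequalities above must be tight: $\lambda_1=\Delta$ and $\Delta=2t+1-n$. The first equality, by the second half of Lemma~\ref{21e}, forces $G$ to contain a $\Delta$-regular component, which is therefore a $(2t+1-n)$-regular component. Lemma~\ref{extg} then immediately classifies $G$ as $tK_2$, $K_{s+1}\cup(t-s)K_2$, or $C_{2s-1}\cup(t-s)K_2$ with $2\le s\le t$.

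Finally I would verify that each listed graph does meet the bound: for $tK_2$ we have $n=2t$, $\lambda_1=1$; for $K_{s+1}\cup(t-s)K_2$ we have $n=2t-s+1$, $\lambda_1=s$; and for $C_{2s-1}\cup(t-s)K_2$ we have $n=2t-1$, $\lambda_1=2$. In every case $n+\lambda_1=2t+1$. There is no real obstacle here—all the work has been done in Lemmas~\ref{21e} and~\ref{extg} and in Theorem~\ref{haj}; the only point requiring care is to note that $\lambda_1(G)$ equals the spectral radius of some component of $G$, so that the regularity forced by Lemma~\ref{21e} pertains to that specific component, which is exactly what Lemma~\ref{extg} assumes.
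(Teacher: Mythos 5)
Your proof is correct and follows essentially the same route as the paper's: chain $\lambda_1\le\Delta$ (Lemma~\ref{21e}) with $\Delta\le 2t+1-n$ (Theorem~\ref{haj}), then use the equality case of Lemma~\ref{21e} to obtain a $(2t+1-n)$-regular component and invoke Lemma~\ref{extg}. Your explicit verification of $n+\lambda_1=2t+1$ for each extremal family is also accurate and slightly more detailed than the paper's "it is easily checked."
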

\begin{proof}
By Theorem~\ref{haj} and Lemma~\ref{21e}, we have $\lambda_1\leq \Delta \leq 2t +1-n$, and thus $n + \lambda_1 \le   2t+1$. It is easily checked that the equality holds when $G$ is one of these extremal graphs as described. Conversely, if the equality holds, then $\lambda_1 =\Delta =2t +1-n$, and thus Lemma~\ref{21e} implies that $G$ contains a $(2t+1-n)$-regular component.  The result then follows from Lemma~\ref{extg}.
\end{proof}

The following corollary is a simple extremal result with respect to solely spectral radius, and it can be considered as a spectral analogue of Theorem~\ref{ehmthm}.
\begin{corollary}
\label{lam1}
If $G$ is a $\tau$-critical graph with $\tau(G)=t$, then $\lambda_1(G)\le t$ with equality  if and only if $G$ is $K_{t+1}$.
\end{corollary}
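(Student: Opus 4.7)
The plan is to derive Corollary \ref{lam1} as a direct consequence of Theorem~\ref{nlam} by combining the inequality $n+\lambda_1\le 2t+1$ with a lower bound $n\ge t+1$, and then to extract the extremal case by inspecting the list of extremal graphs from Theorem~\ref{nlam}.

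First, I would establish that every $\tau$-critical graph $G$ satisfies $n\ge t+1$. This follows from the identity $\tau(G)+\alpha(G)=n$ together with the fact that $\alpha(G)\ge 1$ (since $G$ has at least one vertex), so $t=\tau(G)\le n-1$. Feeding this into Theorem~\ref{nlam} gives
\[
\lambda_1(G)\le 2t+1-n\le 2t+1-(t+1)=t,
\]
which is the inequality.

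For the equality characterization, $\lambda_1(G)=t$ forces both $n=t+1$ \emph{and} equality in Theorem~\ref{nlam}. Thus $G$ must be one of $tK_2$, $K_{s+1}\cup(t-s)K_2$, or $C_{2s-1}\cup(t-s)K_2$ with $2\le s\le t$, and simultaneously have exactly $t+1$ vertices. A brief case check settles this: $tK_2$ has $2t$ vertices, forcing $t=1$ and so $G=K_2=K_{t+1}$; $K_{s+1}\cup(t-s)K_2$ has $2t-s+1$ vertices, forcing $s=t$ and so $G=K_{t+1}$; and $C_{2s-1}\cup(t-s)K_2$ has $2t-1$ vertices, forcing $t=2=s$ and so $G=C_3=K_3=K_{t+1}$. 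In every case $G=K_{t+1}$, and conversely $\lambda_1(K_{t+1})=t$ is well-known, completing the characterization.

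There is no real obstacle here; the proof is essentially a one-line deduction from Theorem~\ref{nlam} combined with the trivial bound $n\ge t+1$. The only mildly delicate step is verifying that the three extremal families from Theorem~\ref{nlam} collapse to $K_{t+1}$ once the order constraint $n=t+1$ is imposed, and this is purely arithmetic.
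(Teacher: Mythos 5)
Your proposal is correct and follows essentially the same route as the paper: bound $n\ge t+1$, feed it into Theorem~\ref{nlam}, and note that equality forces $n=t+1$. The only cosmetic difference is that the paper concludes $G=K_{t+1}$ from the standard fact that a $\tau$-critical graph with $\tau(G)=|V(G)|-1$ is complete, whereas you verify the same conclusion by checking the three extremal families of Theorem~\ref{nlam} against the order constraint; both checks are immediate.
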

\begin{proof}
Since $|V(G)|\ge t+1$, we have $\lambda_1(G)\le t$  by Theorem~\ref{nlam}. The equality holds if and only if $|V(G)| = t+1$,
that is,  if and only if $G$ is $K_{t+1}$.
\end{proof}

The following result looks more general, however, it can be derived from Theorem~\ref{nlam}.
\begin{corollary}
\label{spect}
Let $r$ be a nonnegative integer. Suppose that $G$ is a $\tau$-critical graph with $\tau(G)=t \ge r$ and $|V(G)|=n$. Then
$$n\left(r + \frac{\lambda_1}{2}\right) \le {t+r+1\choose 2},$$  with equality if and only if one of the following holds:
\\$(1)$ $K_{t+1}$  when $r=0$;
\\$(2)$ $K_{t+1}$, $K_{t}\cup K_2$ for $t\ge 2$, or $C_5$, when $r=1$;
\\$(3)$ $rK_2$, $(r+1)K_2$, $K_{t-r+2}\cup (r-1)K_2$, $K_{t-r+1}\cup rK_2$ with $r\leq t-2$,  $C_{2s-1}\cup (r+1-s)K_2$ for $2\leq s\leq r+1$,
or $C_{2s-1}\cup (r+2-s)K_2$ for $2\leq s\leq r+2$,  when $r\ge 2$.
\end{corollary}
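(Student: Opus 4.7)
The plan is to derive Corollary~\ref{spect} directly from Theorem~\ref{nlam} via a single algebraic identity. Theorem~\ref{nlam} yields $\lambda_1 \le 2t+1-n$; multiplying this by $n$ and adding $2rn$ to both sides gives
\[
n(2r + \lambda_1) \le n(2t+2r+1-n).
\]
Dividing by $2$ reduces the target inequality to the purely combinatorial claim that
\[
n(2t+2r+1-n) \le (t+r+1)(t+r).
\]

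The key observation, verified by direct expansion, is the identity
\[
(t+r+1)(t+r) - n(2t+2r+1-n) = (t+r-n)(t+r-n+1).
\]
The right-hand side is a product of two consecutive integers and is therefore always nonnegative. This establishes the inequality in one line.

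For the equality characterization, both steps must be tight simultaneously. The identity being zero forces $n = t+r$ or $n = t+r+1$, while equality in Theorem~\ref{nlam} forces $G$ to be one of $tK_2$, $K_{s+1}\cup(t-s)K_2$, or $C_{2s-1}\cup(t-s)K_2$ for some $2\le s\le t$. It then remains to intersect these three families, which have orders $2t$, $2t-s+1$, and $2t-1$ respectively, with the vertex-count constraint $n\in\{t+r,\,t+r+1\}$. For $r=0$, since $n\ge t+1$ (with equality iff $G=K_{t+1}$), only $n=t+1$ is feasible, producing $K_{t+1}$. For $r=1$, matching $n\in\{t+1,t+2\}$ against the three families yields $K_{t+1}$, $K_t\cup K_2$ for $t\ge 2$, and $C_5$ (from $s=t=3$). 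For $r\ge 2$, matching $n\in\{t+r,t+r+1\}$ yields the six families enumerated in the statement, where $rK_2$ and $(r+1)K_2$ come from $tK_2$ with $t=r,r+1$; $K_{t-r+2}\cup(r-1)K_2$ and $K_{t-r+1}\cup rK_2$ come from setting $s = t-r+1, t-r$ in the $K_{s+1}\cup(t-s)K_2$ family (subject to $s\ge 2$); and the two cycle families come from forcing $t=r+1$ or $t=r+2$ in the $C_{2s-1}\cup(t-s)K_2$ family.

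The main obstacle I anticipate is not the analytic part---the identity above makes the inequality essentially automatic---but rather the careful bookkeeping of the equality cases, in particular tracking the constraints $2\le s\le t$ and $t\ge r$ and checking that degenerate coincidences (e.g.\ $C_3 = K_3$ at the boundary of the cycle and clique families) are not double-listed.
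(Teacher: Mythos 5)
Your proposal is correct and follows essentially the same route as the paper: apply Theorem~\ref{nlam} to bound $\lambda_1$ by $2t+1-n$, then maximize the resulting quadratic $n(2t+2r+1-n)$ over integers $n$ (your factorization $(t+r+1)(t+r)-n(2t+2r+1-n)=(t+r-n)(t+r-n+1)$ is just an explicit form of the paper's observation that the maximum occurs at $n=t+r$ or $n=t+r+1$), and finally intersect the extremal families of Theorem~\ref{nlam} with $n\in\{t+r,\,t+r+1\}$. The equality bookkeeping you outline matches the paper's case analysis for $r=0$, $r=1$, and $r\ge 2$.
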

\begin{proof}
Since $\lambda_1\le 2t -n+1$  by Theorem \ref{nlam}, we have $n\left(r+ \frac{\lambda_1}{2}\right) \le \frac{n(2t+2r+1 -n)}{2}$.
Since $\frac{n(2t+2r+1 -n)}{2}$ is maximized when $n=t+r$  or $n=t+r+1$, we have
$$n\left(r+ \frac{\lambda_1}{2}\right)\le \frac{(t+r+1)(t+r)}{2} = {t+r+1\choose 2}.$$
If $G$ is one of the graphs listed in the statement, then it is easily checked that the equality holds. Next we suppose that the equality holds.
Then $\lambda_1 = 2t-n+1$ and either $n=t+r$ or $n=t+r+1$. By Theorem~\ref{nlam}, we can deduce the following:

(1) $r=0$. Notice that $n\geq t+1$ by the definition of transversal number, it follows that $n=t+1$. Thus $G$ is $K_{t+1}$.

(2) $r=1$. Then $n=t+1$ or $n=t+2$. By verifying all extremal graphs in Theorem~\ref{nlam}, it follows that $G$ is $K_{t+1}$ if $n=t+1$; and $G$ is $2K_2$ (i.e. $K_2\cup K_2$), $K_t\cup K_2$ for $t\ge 3$, or $C_5$ if $n=t+2$.

(3) $r\ge 2$. Then $n=t+r$ or $n=t+r+1$. Again, it suffices to verify all extremal graphs in Theorem~\ref{nlam}. If $n=t+r$, then $G$ is $tK_2 = rK_2$, $K_{t-r+2}\cup (r-1)K_2$ or $C_{2s-1}\cup (t-s)K_2 =C_{2s-1}\cup (r+1-s)K_2$ for $2\leq s\leq r+1$; and if $n=t+r+1$, then $G$ is $tK_2 = (r+1)K_2$,   $K_{t-r+1}\cup rK_2$ provided  that $r\leq t-2$, or $C_{2s-1}\cup (t-s)K_2 =C_{2s-1}\cup (r+2-s)K_2$ for $2\leq s\leq r+2$.
\end{proof}

In the rest of this section, we present a proof of Theorem~\ref{rvpe}.
\begin{proof}[Proof of Theorem~\normalfont\ref{rvpe}]
By Lemma \ref{21e}, we have $|E(G)|\le n\lambda_1 /2$ and so $r|V(G)| + |E(G)|\le n\left(r + \frac{\lambda_1}{2}\right)$. By Corollary~\ref{spect}, it follows that $r|V(G)| + |E(G)|\le {t+r+1\choose 2}$. The equality holds if and only if the equalities hold in both Lemma \ref{21e} and Corollary~\ref{spect}. In other words, the extremal graphs are those graphs in Corollary~\ref{spect} that are regular. By verifying the graphs listed in Corollary~\ref{spect}, we conclude that the equality holds if and only if $G$ is $K_{t+1}$ when $r=0$; $K_{t+1}$, $2K_2$, or $C_5$ when $r=1$; and $rK_2$, $(r+1)K_2$, $C_{2r+1}$, or $C_{2r+3}$ when $r\ge 2$.
\end{proof}

\section{Remarks}
In this paper, we proved spectral extremal results for $\tau$-critical graphs. The results also imply a combinatorial theorem which is stronger than former results by Erd\H{o}s, Hajnal and Moon~\cite{EHM64} and by Gy\'arf\'as and Lehel~\cite{GyLe20}. We make some remarks to show further generalizations.

\medskip\noindent
{\bf Remark 1.}
In the proof of Corollary~\ref{spect}, notice that $r$ is nonnegative but not necessary to be integral. In fact, $\frac{n(2t+2r+1 -n)}{2}$ is maximized
when $n=(2t+2r+1)/2$, and thus we have $$n\left(r+ \frac{\lambda_1}{2}\right)\le \frac{(2t+2r+1)^2}{8}.$$
The equality is attained only if $(2t+2r+1)/2$ is an integer, i.e., $2r$ is odd. By Theorem \ref{nlam}, it is not hard to show that the extremal graphs
are $K_{t+1}$ for $r=\frac{1}{2}$ as well as $\frac{2r+1}{2}K_2$, $K_{t-r+3/2}\cup (r-\frac{1}{2})K_2$ or $C_{2s-1}\cup \frac{3+2r-2s}{2}K_2$
where $2\leq s\leq r+\frac{3}{2}$, for other values $r$ such that $2r$ is odd.

More general forms like $n\left(r_1 + r_2\frac{\lambda_1}{2}\right)$ and $r_1 |V(G)| + r_2 |E(G)|$ may be done similarly
with more complicated arguments, but we did not put effort on it.

\medskip\noindent
{\bf Remark 2.} The matrix $Q(G)=D(G)+A(G)$ is the {\it signless Laplacian matrix} of $G$, where $D(G)$ is the diagonal degree matrix
and $A(G)$ is the adjacency matrix of $G$. Let $q_1 := q_1(G)$ be the largest eigenvalue of $Q(G)$, which is called the {\it signless Laplacian spectral radius} of $G$. It is known that (see \cite{Cd1}) $2\lambda_1\le q_1\leq 2\Delta$,  where $\Delta$ is the maximum degree of $G$. Furthermore, the second equality holds if and only if $G$ has a $\Delta$-regular component. Thus, with the same method, we can extend Theorem~\ref{nlam}, Corollaries~\ref{lam1} and \ref{spect} by showing that
$$n + \frac{q_1}{2}\le 2t+1,\ \  q_1\le 2t \ \ \text{and}\ \ n\left(r+ \frac{q_1}{4}\right)\le {t+r+1\choose 2},$$
with the same extremal graphs as those in Theorem~\ref{nlam}, Corollaries~\ref{lam1} and \ref{spect}.

\par\medskip
\noindent {\bf Acknowledgements.}
The authors would like to thank two anonymous referees for their valuable comments which lead to an improvement of the original manuscript.
Xiaofeng Gu is partially supported by a grant from the Simons Foundation (522728). 


\end{document}